\pdfoutput=1
\documentclass[11pt]{article}

\usepackage[margin=1.1in]{geometry}
\usepackage{amsmath,amssymb,amsthm}
\usepackage{mathtools}
\usepackage{booktabs}
\usepackage{microtype}
\usepackage{xcolor}
\usepackage[colorlinks=true,linkcolor=blue!55!black,citecolor=blue!55!black,urlcolor=blue!55!black]{hyperref}
\usepackage[capitalize,noabbrev]{cleveref}

\theoremstyle{plain}
\newtheorem{theorem}{Theorem}
\newtheorem{proposition}{Proposition}
\newtheorem{corollary}{Corollary}
\theoremstyle{definition}
\newtheorem*{definitionx}{Definition}
\theoremstyle{remark}
\newtheorem{remark}{Remark}

\newcommand{\Z}{\mathbb{Z}}
\newcommand{\C}{\mathbb{C}}
\newcommand{\F}{\mathbb{F}}
\newcommand{\Nmin}{N_{\min}}
\DeclareMathOperator{\per}{per}

\title{Ryser, Glynn, and the discrete Fourier transform:\\
orthogonal schemes for the permanent}
\author{Jos\'e A.\ R.\ Fonollosa\thanks{Universitat Polit\`ecnica de
Catalunya, Barcelona, Spain. \texttt{jose.fonollosa@upc.edu}. ORCID:
\href{https://orcid.org/0000-0001-9513-7939}{0000-0001-9513-7939}.}}
\date{July 2026}

\begin{document}
\maketitle

\begin{abstract}
The permanent of an $n \times n$ matrix is the coefficient of the fully mixed
monomial $x_0 \cdots x_{n-1}$ in the product of its row forms
$\prod_i\bigl(\sum_j b_{ij} x_j\bigr)$, and the classical exact algorithms of
Ryser and Glynn compute it by summing $2^{n-1}$ evaluations of that product over the
Boolean cube. We recast this as a single principle --- an \emph{evaluation
scheme} together with an orthogonality criterion that decides, in one line,
whether its weighted sum equals the permanent --- and show that Ryser's formula,
Glynn's formula, and a discrete Fourier transform are three instances of it: the
$0$--$1$ and $\pm 1$ Hadamard systems, and a single cyclic transform of length
$N$. The DFT scheme evaluates $\per B$ as one coefficient of a univariate
polynomial modulo $x^N - 1$; it is exact exactly when the exponent set is
\emph{valid} mod $N$, runs in the same $\Theta(2^n n)$ operations as the
classical formulas, and over a finite field is a number-theoretic transform that
returns the exact integer permanent by the Chinese remainder theorem. Where the
Hadamard schemes are orthogonal for free, the cyclic scheme
trades this for an arithmetic condition on the exponents; the smallest length at
which it can be met is $N = 2^n - 2^{\lfloor \log_2 n \rfloor}$, established in
the companion paper \cite{fonollosa2026minmod}. All three are character sums over
a finite abelian group, and the criterion holds precisely for such groups.

\medskip
\noindent\textbf{Keywords:} matrix permanent, Ryser's formula, Glynn's formula,
discrete Fourier transform, number-theoretic transform, Hadamard transform,
coefficient extraction.

\noindent\textbf{MSC 2020:} 15A15 (primary); 65T50, 68W30 (secondary).
\end{abstract}

\section{Introduction}

The permanent of an $n \times n$ matrix $B = (b_{ij})$,
\[
\per B \;=\; \sum_{\sigma \in S_n} \prod_{i=1}^{n} b_{i\sigma(i)},
\]
is \#P-complete to compute exactly, even for $0$--$1$ matrices
\cite{valiant1979}. The fastest known exact algorithms are Ryser's
inclusion--exclusion formula \cite{ryser1963,nijenhuis1978} and Glynn's
polarization identity \cite{glynn2010}, each a weighted sum of $2^{n-1}$ terms
and each running in $\Theta(2^n n)$ arithmetic operations.

Both formulas are instances of one idea. Writing $\per B$ as the coefficient of
the fully mixed monomial $x_0 \cdots x_{n-1}$ in the row-product polynomial
$\prod_i\bigl(\sum_j b_{ij} x_j\bigr)$, one extracts that coefficient by
evaluating the product at a structured family of points and combining the values
with fixed weights: Ryser evaluates over $\{0,1\}^n$ with inclusion--exclusion
signs, Glynn over $\{\pm 1\}^n$ with a parity weight, halved to $2^{n-1}$ points
by antipodal symmetry. This paper makes the common
structure explicit --- an \emph{evaluation scheme} and a one-line
\emph{orthogonality criterion} that decides when its weighted sum is the
permanent (\cref{prop:ortho}) --- and adds a third instance that is neither
Boolean nor a Hadamard transform.

The new instance collapses the $n$ variables onto powers of a single one,
$x_j \mapsto x^{a_j}$, and works modulo $x^N - 1$: then $\per B$ is a single
coefficient of a univariate polynomial of degree ${<}N$, read off by one
discrete Fourier transform of length $N$ (\cref{prop:dft}). This evaluation is
exact precisely when the exponent set $A = \{a_j\}$ is \emph{valid} mod $N$ ---
no size-$n$ multiset other than the all-ones one sums to $p = \sum_j a_j$ modulo
$N$. Over a finite field it becomes a number-theoretic transform and, across
several primes with the Chinese remainder theorem, returns the exact integer
permanent with no rounding.

The two kinds of scheme sit in complementary regimes. The Hadamard schemes are
orthogonal \emph{unconditionally} --- they compute the permanent of every matrix
--- each at $2^{n-1}$ evaluation points after its natural halving (Glynn's
$\delta_0 = 1$, the Nijenhuis--Wilf reduction of Ryser). The cyclic DFT instead
trades unconditional orthogonality for an arithmetic condition on the exponents:
it is exact only for \emph{valid} $(A, N)$, and the least valid length is
$\Nmin(n) = 2^n - 2^{\lfloor \log_2 n \rfloor} \approx 2^n$, determined in the
companion paper \cite{fonollosa2026minmod}, whose combinatorial \emph{unique
multiset-sum problem} is exactly the validity condition used here. All three reach
the same $\Theta(2^n n)$ arithmetic.

\paragraph{Related work.}
That Ryser's and Glynn's formulas share one pattern is, in essence, Glynn's own
observation: his formula is derived from a polarization identity for symmetric
tensors in \cite{glynn2010}, and \cite{glynn2013} develops from it a whole
family of permanent formulae, parametrized via the Veronese variety, containing
both. Character sums beyond $\pm 1$ have also appeared: Nilsson
\cite{nilsson2017} rederives Glynn's formula from the $\Z_2$ gauge invariance of
a Grassmann-integral representation and records its $p$-th-roots-of-unity
analogue over $(\Z_p)^n$, and the boson-sampling literature contains
roots-of-unity generalizations of Glynn's formula to matrices with repeated rows
and columns (see \cite{chabaud2022} and the references therein). All of these
evaluate over a full product group with the standard embedding --- $p^n$
points --- so no condition on the exponents ever arises. What is new here, to
our knowledge, is the criterion as an exact characterization
(\cref{prop:ortho,prop:abelian}) and its \emph{cyclic} instance --- a single
variable, $x_j \mapsto x^{a_j}$ modulo $x^N - 1$ --- where orthogonality becomes
an arithmetic condition on the exponent set and the least admissible transform
length becomes a well-posed combinatorial question, answered by
\cref{thm:minmod}. In the language of algebraic complexity, every evaluation
scheme is a depth-three ($\Sigma\Pi\Sigma$) arithmetic formula for the
permanent of a special shape --- each product gate multiplies one linear form
per row (a \emph{set-multilinear} formula), here with the same coefficient
vector $v_r$ in every row --- so a scheme with $|R|$ points writes the
permanent tensor as a sum of $|R|$ rank-one terms. In this class the
$2^{\Theta(n)}$ point count is forced: the partial derivatives of $\per$ span
a space of dimension $\binom{2n}{n}$ (the subpermanents, linearly independent
over every field), while those of a product of $n$ row-linear forms span at
most $2^n$, so $|R| \ge \binom{2n}{n}/2^n \sim 2^n\!/\sqrt{\pi n}$ by the
partial-derivative method of Nisan and Wigderson \cite{nisan1997}. The
$\approx 2^n$ count shared by the three schemes is therefore optimal in this
class up to a polynomial factor; what \cref{thm:minmod} and
\cref{rem:noncyclic} add is the exact minimum within the character-scheme
subclass. Read in the other direction through \cref{prop:abelian}, the
complexity bound becomes combinatorial: any finite abelian group admitting a
size-$n$ family with unique multiset sums has order at least
$\binom{2n}{n}/2^n$, within a factor $O(\sqrt n\,)$ of Glynn's $2^{\,n-1}$
(\cref{cor:nwbound}).

\paragraph{Organization.}
\Cref{sec:framework} sets up the general evaluation scheme, proves the
orthogonality criterion, and recovers Ryser, Glynn, and the cyclic DFT as its
three instances. \Cref{sec:identity} then develops the cyclic instance in detail
--- the explicit transform, its number-theoretic variant, a numerical validity
test, and cost. All computational statements here are asymptotic; a practical GPU
implementation, with timings, is beyond the scope of this paper.

\section{An orthogonality criterion}\label{sec:framework}

All three algorithms rest on a single principle: $\per B$ is the fully mixed
coefficient of the row-product polynomial, and \emph{any} family of evaluations
whose weighted sum isolates that coefficient computes it. We state the principle
in general, read Ryser's and Glynn's formulas off it, and defer the cyclic
instance new to this paper to \cref{sec:identity}.

\paragraph{Coefficient extraction.}
For $B \in \C^{n \times n}$ the row-product polynomial
\[
P_B(x_0, \dots, x_{n-1}) \;=\; \prod_{i=1}^n\Bigl(\sum_{j=0}^{n-1} b_{ij}\, x_j\Bigr)
\]
is homogeneous of degree $n$, and $\per B$ is its coefficient of the fully mixed
monomial $x_0 x_1 \cdots x_{n-1}$. Expanding, each map
$f : \{1,\dots,n\} \to \{0,\dots,n-1\}$ contributes $\prod_i b_{i f(i)}$ times
$\prod_j x_j^{k_j}$, where $k_j = |f^{-1}(j)|$ and $\sum_j k_j = n$; the mixed
monomial is contributed exactly by the bijections, i.e.\ by the multiplicity
vector $k = \mathbf 1 := (1,\dots,1)$.

\begin{definitionx}
An \emph{evaluation scheme} of order $n$ is a finite family of vectors
$v_r \in \C^n$ ($r \in R$) with weights $w_r \in \C$ and a constant $c \in \C$.
Its \emph{transform sum} is
\[
T[B] \;=\; c \sum_{r \in R} w_r \prod_{i=1}^n \Bigl(\sum_{j=0}^{n-1} b_{ij}\, v_{r,j}\Bigr),
\qquad B \in \C^{n \times n}.
\]
\end{definitionx}

\begin{proposition}[orthogonality criterion]\label{prop:ortho}
For each multiplicity vector $k \in \Z_{\ge 0}^{\,n}$ with $|k| = n$ put
\[
\Phi(k) \;=\; c \sum_{r \in R} w_r \prod_{j=0}^{n-1} v_{r,j}^{\,k_j}
\qquad (\text{with } t^0 := 1).
\]
Then $T[B] = \per B$ for every $B \in \C^{n\times n}$ if and only if
$\Phi(k) = [\,k = \mathbf 1\,]$ for all such $k$.
\end{proposition}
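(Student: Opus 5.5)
The plan is to expand $T[B]$ in monomials of the matrix entries and compare coefficients. For a fixed $r$, expanding the product over rows exactly as in the coefficient-extraction paragraph gives
\[
\prod_{i=1}^n\Bigl(\sum_j b_{ij} v_{r,j}\Bigr)=\sum_{f}\prod_i b_{i f(i)}\prod_j v_{r,j}^{k_j(f)},
\]
where $f$ ranges over maps $\{1,\dots,n\}\to\{0,\dots,n-1\}$ and $k_j(f)=|f^{-1}(j)|$. Multiplying by $c\,w_r$, summing over $r$, and exchanging the two finite sums yields
\[
T[B]=\sum_f \Phi(k(f))\prod_i b_{i f(i)} .
\]
Distinct maps $f$ give distinct monomials $\prod_i b_{if(i)}$ in the $n^2$ independent variables $b_{ij}$, since each row index $i$ occurs exactly once and records $f(i)$. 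Hence $T[B]$, viewed as a polynomial in the entries, has coefficient $\Phi(k(f))$ on the monomial indexed by $f$. On the other side, $\per B$ has coefficient $1$ on the monomials with $f$ a bijection and $0$ on all others.

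For sufficiency, assume $\Phi(k)=[k=\mathbf 1]$. Then only the maps with $k(f)=\mathbf 1$ survive, and these are exactly the bijections. They contribute $\sum_{\sigma}\prod_i b_{i\sigma(i)}=\per B$.

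For necessity, assume $T[B]=\per B$ for all complex $B$. Two polynomials agreeing on all of $\C^{n^2}$ are equal as polynomials, because $\C$ is infinite. Comparing coefficients therefore gives $\Phi(k(f))=[f \text{ bijective}]$ for every $f$. It remains to check that every $k\in\Z_{\ge0}^n$ with $|k|=n$ arises as $k(f)$ for some $f$. This holds: assign the first $k_0$ rows to column $0$, the next $k_1$ rows to column $1$, and so on. Since $k(f)=\mathbf 1$ exactly when $f$ is a bijection, we get $\Phi(k)=[k=\mathbf 1]$ for all $k$.

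No step is a real obstacle. The points needing care are the identification of distinct $f$ with distinct monomials, which justifies comparing coefficients, and the surjectivity of $f\mapsto k(f)$, which makes the necessity direction cover every $k$. The convention $t^0=1$ keeps the formula valid when some $v_{r,j}=0$, as in Ryser's $0$--$1$ scheme.
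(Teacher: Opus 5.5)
Your proposal is correct and follows essentially the same route as the paper: expand $T[B]$ over maps $f$ with coefficient $\Phi(k(f))$, use that distinct $f$ give distinct monomials to compare coefficients with $\per B$, and note that every admissible $k$ arises as some $k(f)$. Your explicit remark that agreement on all of $\C^{n^2}$ forces polynomial identity because $\C$ is infinite, and your explicit construction of an $f$ with a given $k(f)$, fill in two small steps the paper states without detail.
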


\begin{proof}
Expanding the product and collecting terms by $f$,
\[
T[B] \;=\; \sum_f \Bigl(\prod_{i=1}^n b_{i f(i)}\Bigr)\, c\sum_{r} w_r \prod_{i} v_{r,f(i)}
\;=\; \sum_f \Bigl(\prod_{i=1}^n b_{i f(i)}\Bigr)\, \Phi\bigl(k(f)\bigr),
\]
since $\prod_i v_{r,f(i)} = \prod_j v_{r,j}^{\,k_j(f)}$ depends on $f$ only through
its multiplicity vector $k(f)$. Regarding the $b_{ij}$ as indeterminates, the
monomials $\prod_i b_{i f(i)}$ are pairwise distinct --- the row-$i$ factor
$b_{i f(i)}$ recovers $f(i)$ --- hence linearly independent, so two such sums
agree for all $B$ if and only if they match coefficientwise. As
$\per B = \sum_f [\,k(f) = \mathbf 1\,]\prod_i b_{i f(i)}$, and every admissible
$k$ equals $k(f)$ for some $f$, the identity holds for all $B$ iff
$\Phi(k) = [\,k = \mathbf 1\,]$ throughout.
\end{proof}

Two of the three schemes below meet this criterion for every matrix; the third,
cyclic one meets it only under a combinatorial condition on its exponents, which
we name now.

\begin{definitionx}
A set $A = \{a_0 < \dots < a_{n-1}\} \subseteq \Z_N$ is \emph{valid mod $N$} if
the all-ones multiset is the only size-$n$ multiset with elements from $A$ whose
sum is $p := \sum_j a_j \pmod N$.
\end{definitionx}

\begin{proposition}[three orthogonal schemes]\label{prop:three}
Each scheme below satisfies $\Phi(k) = [\,k = \mathbf 1\,]$, and hence computes
$\per B$ by \cref{prop:ortho}.
\begin{itemize}
\item[\textup{(R)}] \emph{Ryser --- Hadamard $0$--$1$.} $R = \{0,1\}^n$, with $v_S
= \chi_S$ the indicator of $S \subseteq \{0,\dots,n-1\}$, $w_S = (-1)^{\,n-|S|}$,
$c = 1$ \textup{\cite{ryser1963,nijenhuis1978}}.
\item[\textup{(G)}] \emph{Glynn --- Hadamard $\pm 1$.}
$R = \{\delta \in \{\pm 1\}^n : \delta_0 = 1\}$, $v_\delta = \delta$,
$w_\delta = \prod_{j} \delta_j$, $c = 2^{-(n-1)}$ \textup{\cite{glynn2010}}.
\item[\textup{(F)}] \emph{This paper --- cyclic DFT.} $R = \{0,\dots,N-1\}$,
$v_{r,j} = \omega^{a_j r}$ with $\omega = e^{2\pi i/N}$, $w_r = \omega^{-pr}$,
$c = 1/N$; orthogonal \emph{if and only if $A$ is valid mod $N$}.
\end{itemize}
\end{proposition}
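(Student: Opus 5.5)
The plan is to compute $\Phi(k)$ in closed form for each scheme, for an arbitrary multiplicity vector $k\in\Z_{\ge0}^n$ with $|k|=n$, and compare with $[\,k=\mathbf 1\,]$. In each case $\Phi(k)$ factors over the coordinates $j$ or collapses to a geometric sum, so the verification is short. Orthogonality then gives $T[B]=\per B$ by \cref{prop:ortho}.

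For (R) we have $\prod_j \chi_S(j)^{k_j}=[\,\mathrm{supp}(k)\subseteq S\,]$, using the convention $0^0=1$. Hence
\[
\Phi(k)=\sum_{S\supseteq \mathrm{supp}(k)}(-1)^{n-|S|}=\sum_{U\subseteq \overline{\mathrm{supp}(k)}}(-1)^{|U|}\cdot(-1)^{n-|\mathrm{supp}(k)|}.
\]
This vanishes unless $\mathrm{supp}(k)$ is all of $\{0,\dots,n-1\}$. Since $|k|=n$, full support forces $k=\mathbf 1$, and in that case $\Phi=1$.

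For (G) the summand is $\prod_j\delta_j^{k_j+1}$. First extend the sum to all of $\{\pm1\}^n$. The summand is invariant under $\delta\mapsto-\delta$, because the total exponent $\sum_j(k_j+1)=2n$ is even. So restricting to $\delta_0=1$ exactly halves the full sum, and
\[
\Phi(k)=2^{-n}\prod_j\sum_{\delta_j=\pm1}\delta_j^{k_j+1}=\prod_j[\,k_j\text{ odd}\,].
\]
If every $k_j$ is odd, then each $k_j\ge1$, and $|k|=n$ forces $k=\mathbf 1$. Conversely, $k=\mathbf 1$ gives $\Phi=1$.

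For (F) write $s(k)=\sum_j k_ja_j$. Then
\[
\Phi(k)=\frac1N\sum_{r=0}^{N-1}\omega^{(s(k)-p)r}=[\,s(k)\equiv p \pmod N\,],
\]
by the standard root-of-unity orthogonality. A vector $k$ with $|k|=n$ is exactly a size-$n$ multiset on $A$, with $k_j$ copies of $a_j$, and $s(k)$ is its sum. Also $k=\mathbf 1$ always gives $s=p$, so $\Phi(\mathbf 1)=1$ automatically. Hence $\Phi(k)=[\,k=\mathbf 1\,]$ for all $k$ holds exactly when no other multiset sums to $p$ modulo $N$, which is the definition of validity. Combined with \cref{prop:ortho}, this gives the stated if-and-only-if for (F).

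The step needing the most care is the halving in (G). The restriction $\delta_0=1$ must be justified by the antipodal invariance, which in turn uses that $n+|k|$ is even. The other two cases need only the support and parity bookkeeping, plus the observation that the elements $a_j$ are distinct, so that distinct $k$ correspond to distinct multisets.
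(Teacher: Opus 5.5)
Your proof is correct and follows essentially the paper's route: a closed-form evaluation of $\Phi(k)$ via the inclusion--exclusion identity for (R), coordinatewise factorization for (G), and root-of-unity orthogonality together with the correspondence between multiplicity vectors and multisets for (F). The only variation is in (G), where you use the antipodal invariance to pass to the full cube and get $\prod_{j}[\,k_j \text{ odd}\,]$ over all $j$; the paper instead factors directly on the half-cube (where $\delta_0^{k_0+1}=1$), gets only $\prod_{j\ge1}[\,k_j\text{ odd}\,]$, and recovers $k_0=1$ from $\sum_{j\ge1}(k_j-1)=1-k_0\le 1$, so the symmetry you flag as the delicate step is not logically needed there.
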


\begin{proof}
\textup{(R)} With $t^0 = 1$ one has $\prod_j \chi_S(j)^{k_j} =
[\,\operatorname{supp} k \subseteq S\,]$, so, using
$\sum_{T \subseteq U}(-1)^{|T|} = [\,U = \varnothing\,]$,
\[
\Phi(k) = (-1)^n\!\!\sum_{S \,\supseteq\, \operatorname{supp} k}\!\!(-1)^{|S|}
= (-1)^{\,n + |\operatorname{supp} k|}\,\bigl[\,\operatorname{supp} k = \{0,\dots,n-1\}\,\bigr]
= [\,k = \mathbf 1\,],
\]
the last equality because $\operatorname{supp} k = \{0,\dots,n-1\}$ with $|k| = n$
forces $k = \mathbf 1$.

\textup{(G)} Fixing $\delta_0 = 1$ discards the redundant antipodal pairs
$\delta \leftrightarrow -\delta$ --- the summand $\prod_j \delta_j^{\,k_j+1}$ is
unchanged under $\delta \mapsto -\delta$, since $\sum_j (k_j+1) = 2n$ is even ---
leaving
$\displaystyle \Phi(k) = 2^{-(n-1)}\!\!\sum_{\delta_0 = 1}\prod_j \delta_j^{\,k_j+1}
= \prod_{j\ge1}\Bigl(\tfrac12\!\!\sum_{\delta_j = \pm1}\!\!\delta_j^{\,k_j+1}\Bigr)
= \prod_{j\ge1} [\,k_j \text{ odd}\,]$,
which is $1$ iff $k_1, \dots, k_{n-1}$ are all odd. Each $k_j - 1$ ($j \ge 1$) is
then a nonnegative even integer with $\sum_{j\ge1}(k_j-1) = 1 - k_0 \le 1$, so
every $k_j = 1$ for $j \ge 1$ and $k_0 = 1$; that is, $k = \mathbf 1$.

\textup{(F)}
$\displaystyle \Phi(k) = \tfrac1N \sum_{r=0}^{N-1} \omega^{\,r(\sum_j k_j a_j - p)}
= \bigl[\,\textstyle\sum_j k_j a_j \equiv p \!\!\pmod N\,\bigr]$
by character orthogonality; this equals $[\,k = \mathbf 1\,]$ for all $k$ with
$|k| = n$ exactly when no multiset other than the all-ones one reaches $p$
modulo $N$ --- the definition of validity.
\end{proof}

\Cref{tab:schemes} collects the three schemes side by side, with the mechanism
by which each meets the criterion.

\begin{table}[h]
\centering
\small
\begin{tabular}{@{}lcccc@{}}
\toprule
scheme & values $v_{r,j}$ & points $|R|$ & weight $w_r$ & orthogonal \\
\midrule
Ryser \textup{(R)}      & $\{0,1\}$              & $2^n$              & $(-1)^{\,n-|S|}$ & always \\
Glynn \textup{(G)}      & $\{-1,+1\}$            & $2^{n-1}$          & $\prod_j \delta_j$ & always \\
this paper \textup{(F)} & $\{\omega^{a_j r}\}$   & $\Nmin = 2^n - 2^m$ & $\omega^{-pr}$   & iff valid mod $N$ \\
\bottomrule
\end{tabular}
\caption{The three orthogonal schemes of \cref{prop:three} (here
$m = \lfloor \log_2 n \rfloor$): the Boolean Hadamard faces \textup{(R)},
\textup{(G)}, orthogonal for every matrix --- Ryser's inclusion--exclusion over
all $2^n$ subsets and Glynn's reduced $2^{n-1}$-point form --- and the cyclic
scheme \textup{(F)}, orthogonal exactly when $A$ is valid mod
$N$.}\label{tab:schemes}
\end{table}

\paragraph{Two regimes of orthogonality.}
The schemes divide by \emph{how} the criterion is met. \textup{(R)} and
\textup{(G)} are the $0$--$1$ and $\pm 1$ faces of the same order-$2^n$ Hadamard
system --- the characters of $(\Z_2)^n$ --- and $\Phi(k) = [\,k = \mathbf 1\,]$
holds \emph{identically}, for every matrix and every $n$. On the $\pm 1$ face the
antipodal symmetry $\delta \leftrightarrow -\delta$ gives every term twice, so one
keeps a single representative of each pair $\{\delta, -\delta\}$ and evaluates only
$2^{n-1}$ points: Glynn does this by fixing $\delta_0 = 1$, the form stated in
\textup{(G)} \textup{\cite{glynn2010}}, and Nijenhuis and Wilf give a variant of
Ryser's method over the same $2^{n-1}$ sign patterns --- one from each
complementary pair --- \textup{\cite{nijenhuis1978}}. Scheme \textup{(F)} instead
replaces the $n$-fold order-$2$ transform by a single order-$N$ cyclic one, for
which orthogonality is no longer automatic but is exactly validity mod $N$. How
small $N$ can be made is itself a combinatorial question --- the \emph{unique
multiset-sum problem} --- answered for the natural super-increasing set in the
companion paper.

\begin{theorem}[least valid modulus; \cite{fonollosa2026minmod}]\label{thm:minmod}
For the super-increasing set $A = \{2^k - 1 : 0 \le k \le n-1\}$ and every
$n \ge 2$, the least $N$ for which $A$ is valid mod $N$ is
\[
\Nmin(n) \;=\; 2^{\,n} - 2^{\lfloor \log_2 n \rfloor}.
\]
Conjecturally no size-$n$ residue set is valid below this bound.
\end{theorem}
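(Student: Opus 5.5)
The plan is to convert validity into a statement about binary representations. Since $\sum_j k_j a_j = \sum_j k_j 2^j - n$ whenever $|k| = n$, and $p = 2^n - 1 - n$, a multiset $k$ hits $p$ modulo $N$ exactly when $S(k) := \sum_j k_j 2^j \equiv 2^n - 1 \pmod N$. So $S(k)$ is a sum of exactly $n$ "coins" drawn from $\{1, 2, \dots, 2^{n-1}\}$.

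\textbf{Step 1 (representable values).} First I would show that $S(k) = 2^n - 1$ forces $k = \mathbf 1$. Merging two equal coins below $2^{n-1}$ lowers the coin count, and the binary expansion of $2^n - 1$ already uses all $n$ coins $1, 2, \dots, 2^{n-1}$ once each. Hence any other $k$ would have more than $n$ coins.

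Next I would characterize which $V$ are sums of exactly $n$ such coins. The condition is $c(V) \le n \le V$, where
\[
c(V) = \lfloor V/2^{n-1} \rfloor + \operatorname{popcount}(V \bmod 2^{n-1})
\]
is the minimum coin count. Every count between $c(V)$ and $V$ is reached by splitting one coin $2^j$ ($j \ge 1$) into two coins $2^{j-1}$ at a time.

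Consequently $A$ is invalid mod $N$ if and only if there is an integer $t \neq 0$ with
\[
V_t := 2^n - 1 + tN \in [\,n,\; n2^{n-1}\,] \quad\text{and}\quad c(V_t) \le n .
\]

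\textbf{Step 2 (invalidity below the bound).} Write $m = \lfloor \log_2 n \rfloor$.
\begin{itemize}
\item If $N \le 2^n - 1 - n$, take $t = -1$. Then $n \le V_{-1} < 2^n$, and $c(V_{-1}) \le n$ holds automatically, so $A$ is invalid.
\item If $2^n - 1 - n < N < 2^n - 2^m$, write $N = 2^n - d$ with $2^m < d \le n+1$ and take $t = 1$. Then $V_1 = 4 \cdot 2^{n-1} - 1 - d$. Its residue $2^{n-1} - 1 - d$ is the $(n-1)$-bit complement of $d$, so
\[
c(V_1) = 3 + (n - 1 - \operatorname{popcount} d) = n + 2 - \operatorname{popcount} d .
\]
This is at most $n$ iff $d$ is not a power of two.
\end{itemize}
The only power of two in $(2^m, n+1]$ is $d = 2^{m+1}$, which occurs only when $n = 2^{m+1} - 1$. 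In that case I would try $t = 2$ or $t = -2$ and verify directly that some multiple lands on a representable $V$ with $c(V) \le n$. Small $n$ must also be checked by hand where $V_1 \le n2^{n-1}$ could fail.

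\textbf{Step 3 (validity at $N = 2^n - 2^m$).} Here I must rule out every $t \neq 0$.
\begin{itemize}
\item $t = -1$ gives $V = 2^m - 1 < n$, so it is excluded.
\item $t \le -2$ gives $V < 0$.
\item $t = 1$ gives $d = 2^m$, hence $c(V_1) = n + 1 > n$.
\item For $2 \le t < n/2$ (the range permitted by $V_t \le n 2^{n-1}$), write $V_t = (t+1)2^n - t2^m - 1$. The quotient by $2^{n-1}$ is about $2t + 1$. The residue is $2^{n-1} - 1 - t2^m$, whose popcount is roughly $n - 1 - m$ minus the carry effects of subtracting $t2^m$. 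The goal is to show $c(V_t) \ge 2t + 1 + (n - 1 - m - \lceil \log_2 t \rceil) > n$, using that $t < n/2 < 2^m$ so that $2t - \log_2 t > m$ needs only $t \ge 2$, together with a careful popcount bound.
\end{itemize}

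I expect this last estimate to be the main obstacle. Exact carry bookkeeping in $\operatorname{popcount}(2^{n-1} - 1 - t2^m)$ is delicate when $t2^m$ is comparable to $2^{n-1}$. I would first handle it via the complement identity $\operatorname{popcount}(2^{n-1} - 1 - x) = n - 1 - \operatorname{popcount}(x)$ for $0 \le x < 2^{n-1}$, applied to $x = t2^m$, falling back on a borrow into the quotient when $t2^m \ge 2^{n-1}$. The closing conjecture in the statement is not part of what is to be proved.
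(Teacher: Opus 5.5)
Your reduction to the ladder $2^j$, the minimum-coin count $c(V)$ with every count in $[c(V),V]$ attainable, and the explicit rivals below the bound follow the route the paper sketches; the full proof is deferred to the companion paper. Your two-part target (invalid for every $N<\Nmin$, valid at $N=\Nmin$) is the right one. Validity is not monotone above $\Nmin$: for $n=4$, $\{0,1,3,7\}$ is valid mod $12$ but not mod $13$, since $3+7+7+7=24\equiv 11=p$. So the sketch's phrase ``once $N\ge\Nmin$'' is not what you should try to prove.

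There is one slip in Step~2. The condition $N>2^n-1-n$ gives $2^m<d\le n$, not $d\le n+1$. Since $n<2^{m+1}$, no power of two lies in $(2^m,n]$. Your exceptional case $d=2^{m+1}$, $n=2^{m+1}-1$ is therefore vacuous: it is $N=2^n-n-1$, already covered by $t=-1$ with $V_{-1}=n$. Likewise, no separate check of $V_1\le n2^{n-1}$ is needed, because it follows from $c(V_1)\le n$.

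The genuine gap is Step~3 for $t\ge2$, which is the heart of the theorem. You leave it open with a target that cannot close. The bound $c(V_t)\ge 2t+1+(n-1-m-\lceil\log_2 t\rceil)>n$ requires $2t-\lceil\log_2 t\rceil>m$. This fails at $t=2$ for every $n\ge 8$, and $t=2$ is in range there.

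The $-m$ is spurious, because multiplying by $2^m$ does not change the popcount. When $t2^m<2^{n-1}$, the complement identity you quote gives exactly $\operatorname{popcount}(2^{n-1}-1-t2^m)=n-1-\operatorname{popcount}(t)$. Hence $c(V_t)=n+2t-\operatorname{popcount}(t)\ge n+t>n$.

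The borrow case is no harder:
\begin{itemize}
\item Write $t2^m=s2^{n-1}+u$ with $0\le u<2^{n-1}$; then $s\le t$, since $m\le n-1$.
\item For $n\ge2$ the number $u$ is even, so $u\le 2^{n-1}-2$.
\item Then $V_t=(2t+1-s)2^{n-1}+(2^{n-1}-1-u)$, with the last term in $[0,2^{n-1})$, which gives $c(V_t)=n+2t-s-\operatorname{popcount}(u)$.
\item Since $\operatorname{popcount}(u)=\operatorname{popcount}(t-s2^{n-1-m})\le t-s$, again $c(V_t)\ge n+t$.
\end{itemize}

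This single bound disposes of every $t\ge1$ at once, with no restriction on the range of $t$, and completes your argument. It also covers $t=1$ at $n=2$, where $d=2^m>2^{n-1}-1$ and your complement step does not literally apply.
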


The bound is easy to motivate, even though its proof (in
\cite{fonollosa2026minmod}) is not. Translating the set to the geometric ladder
$a_j = 2^j$ (\cref{rem:translation}), a size-$n$ multiset with multiplicities
$k = (k_0, \dots, k_{n-1})$, $\sum_j k_j = n$, has value $\sum_j k_j 2^j$, and the
all-ones multiset has value $2^n - 1$; a rival is a \emph{different} $k$ of the
same size with $\sum_j k_j 2^j \equiv 2^n - 1 \pmod N$, i.e.\
$\sum_j k_j 2^j = (2^n - 1) + rN$ for some integer $r \ne 0$. A minimal-digit-sum
estimate shows that every nonzero multiple $rN$ forces any binary representation
of its target to spend strictly more than $n$ coins once $N \ge 2^n -
2^{\lfloor \log_2 n \rfloor}$, so no rival of size exactly $n$ survives; just
below that modulus an explicit rival appears. The correction $2^{\lfloor \log_2 n
\rfloor}$ is precisely the slack at the tight boundary $2^{\lfloor \log_2 n
\rfloor} \le n \le 2^{\lfloor \log_2 n \rfloor + 1} - 1$.

This $\Nmin(n)$ is below $2^n$, and for a \emph{real} matrix the conjugate
symmetry $\omega \leftrightarrow \omega^{-1}$ (\cref{rem:translation}) halves it
to $\approx 2^{n-1}$, the same order as Glynn's reduced form; over $\C$ or a
finite field no such halving applies. The three schemes are, in the end, on the
same footing computationally: the Hadamard schemes use on the order of $2^{n-1}$
evaluation points and the cyclic transform $\approx 2^n$, but all reach the same
$\Theta(2^n n)$ arithmetic operations (below), each attaining its
$\Theta(n)$-per-point cost only through an ordered, incremental traversal of its
evaluation points. What separates \textup{(F)} is not its cost
but its nature: it is the one \emph{cyclic} instance, orthogonal by an arithmetic
condition rather than for free.

\paragraph{The weight is the target's dual character.}
In every scheme $w_r$ is the value at $r$ of the character dual to the mixed
monomial: $\prod_j \delta_j$ is the $(\Z_2)^n$-character of the full coordinate
set \textup{(G)}; $(-1)^{\,n-|S|}$ is the M\"obius weight selecting full support
\textup{(R)}; and $\omega^{-pr}$ is the $\Z_N$-character dual to the target
exponent $p = \sum_j a_j$ \textup{(F)}. Reading off the permanent is, in each
case, the projection of the evaluated row-products onto that one character ---
Glynn's closing multiplication by a $\pm 1$ vector before summing is exactly
this dual-character weighting. Seen this way, the unique multiset-sum problem of
\cref{thm:minmod} is the question of the smallest \emph{cyclic} transform that
orthogonalizes the permanent, and \cref{prop:ortho,prop:three} set Ryser, Glynn,
and this transform on a common footing.

\paragraph{Arbitrary abelian groups.}
Nothing in \cref{prop:three} is special to $\Z_N$ or to $(\Z_2)^n$: both are
instances of one construction over an arbitrary finite abelian group. Let $G$ be
a finite abelian group of order $N$, written additively, with character group
$\hat G$, and fix $g_0, \dots, g_{n-1} \in G$ with sum $t = \sum_j g_j$. The
\emph{character scheme} over $(G, g)$ takes $R = \hat G$, $v_{\chi,j} = \chi(g_j)$,
weight $w_\chi = \overline{\chi(t)}$, and $c = 1/N$.

\begin{proposition}[abelian-group schemes]\label{prop:abelian}
The character scheme over $(G, g)$ satisfies $\Phi(k) = [\,k = \mathbf 1\,]$ ---
and hence computes $\per B$ --- if and only if $g_0, \dots, g_{n-1}$ have
\emph{unique multiset sums} in $G$: the only $k \in \Z_{\ge 0}^{\,n}$ with
$|k| = n$ and $\sum_j k_j g_j = t$ is $k = \mathbf 1$.
\end{proposition}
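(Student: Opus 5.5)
The plan is to compute $\Phi(k)$ for the character scheme in closed form and then read off the criterion, in the same way as for scheme (F) in \cref{prop:three}. For $k \in \Z_{\ge 0}^{\,n}$ with $|k| = n$, the definition in \cref{prop:ortho} gives
\[
\Phi(k) \;=\; \frac1N \sum_{\chi \in \hat G} \overline{\chi(t)} \prod_{j=0}^{n-1} \chi(g_j)^{k_j}.
\]
Since each $\chi$ is a homomorphism $G \to \C^\times$, we have $\prod_j \chi(g_j)^{k_j} = \chi\bigl(\sum_j k_j g_j\bigr)$. Here $k_j = 0$ is harmless, because $\chi(g_j)^0 = 1 = \chi(0)$. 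Characters take values on the unit circle, so $\overline{\chi(t)} = \chi(-t)$. The summand is therefore $\chi(s_k - t)$, where $s_k := \sum_j k_j g_j$.

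The key step is the orthogonality relation for characters of a finite abelian group: $\frac1N\sum_{\chi\in\hat G}\chi(h) = [\,h = 0\,]$. I would cite this as standard, or prove it briefly. If $h \neq 0$, one uses that $\hat G$ separates points, so some $\psi$ has $\psi(h) \neq 1$. Multiplying the sum by $\psi(h)$ permutes its terms, so the sum must vanish. If $h = 0$, every term is $1$, and $|\hat G| = |G| = N$. Applying this with $h = s_k - t$ gives $\Phi(k) = [\,s_k = t\,]$.

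Finally, note that $s_{\mathbf 1} = t$, so $\Phi(\mathbf 1) = 1$ holds automatically. Hence $\Phi(k) = [\,k = \mathbf 1\,]$ for all $k$ with $|k| = n$ holds exactly when no $k \neq \mathbf 1$ with $|k| = n$ has $s_k = t$. This is precisely the unique multiset-sum condition. The equivalence with $T[B] = \per B$ for all $B$ then follows from \cref{prop:ortho}.

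The only nontrivial input is the fact that $\hat G$ separates points, equivalently $|\hat G| = N$. For this I would invoke the structure theorem, writing $G \cong \prod_i \Z_{N_i}$ with characters $\chi_m(h) = \prod_i e^{2\pi i m_i h_i/N_i}$. In that form the orthogonality relation factors into the cyclic case already used in (F).

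As consistency checks, $G = \Z_N$ with $g_j = a_j$ recovers (F) and validity mod $N$. Taking $G = (\Z_2)^n$ with $g_j = e_j$, the characters are $\chi_\delta(e_j) = \delta_j$ and $\overline{\chi_\delta(t)} = \prod_j \delta_j$. This is the unhalved Glynn scheme, with $c = 2^{-n}$ over all $\delta$.
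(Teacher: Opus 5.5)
Your proposal is correct and follows the paper's proof: you collapse $\prod_j \chi(g_j)^{k_j}$ to the single value $\chi\bigl(\sum_j k_j g_j\bigr)$, apply character orthogonality to get $\Phi(k) = \bigl[\sum_j k_j g_j = t\bigr]$, and note that this equals $[\,k = \mathbf 1\,]$ exactly under unique multiset sums. The only difference is that you sketch a proof of the orthogonality relation (via separation of points and $|\hat G| = |G|$), which the paper simply cites as standard.
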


\begin{proof}
Since $\prod_j \chi(g_j)^{k_j} = \chi\bigl(\sum_j k_j g_j\bigr)$ and
$\frac1N \sum_{\chi \in \hat G} \chi(h) = [\,h = 0\,]$ (orthogonality of the
characters of $G$),
\[
\Phi(k) = \frac1N \sum_{\chi \in \hat G} \overline{\chi(t)}\,
\chi\Bigl(\textstyle\sum_j k_j g_j\Bigr)
= \Bigl[\,\textstyle\sum_j k_j g_j = t \ \text{in } G\,\Bigr],
\]
which equals $[\,k = \mathbf 1\,]$ for all admissible $k$ exactly under the stated
uniqueness.
\end{proof}

The Boolean and cyclic schemes are the two extremes of this construction. The
unreduced $\pm 1$ Hadamard --- $G = (\Z_2)^n$ with $g_j = e_j$ the standard
generators --- has uniqueness \emph{automatic}: $\sum_j k_j e_j = \mathbf 1$
forces every $k_j$ odd, hence $k = \mathbf 1$, at the price $N = 2^n$. Glynn's
reduced form \textup{(G)}, fixing $\delta_0 = 1$, is precisely the character
scheme over the quotient $(\Z_2)^n / \langle e_0 \rangle \cong (\Z_2)^{n-1}$ with
embedding $g = (0, e_1, \dots, e_{n-1})$ --- its characters are the $\delta_0 = 1$
evaluations --- and this embedding still has unique multiset sums
(\cref{rem:noncyclic}), so the permanent is already computed from a group of order
$2^{n-1}$. The intermediate products $G = (\Z_p)^n$ with the standard embedding
have automatic uniqueness by the same argument taken mod $p$; the resulting
$p^n$-point scheme is Nilsson's roots-of-unity formula
\textup{\cite[Eq.~(21)]{nilsson2017}}. Taking $G = \Z_N$ with $g_j = a_j$
recovers the cyclic scheme
\textup{(F)}, where uniqueness is the validity condition and $N$ can fall below
$2^n$ (Ryser is the inclusion--exclusion companion of the $(\Z_2)^n$ case). Every
finite abelian group in between yields an exact permanent evaluator whenever its
embedding has unique multiset sums, and \cref{thm:minmod} --- the least valid
\emph{cyclic} $N$ --- is one case of a broader question: the least $|G|$, over
\emph{all} finite abelian groups, admitting a size-$n$ embedding with unique
multiset sums. Commutativity is used only to collapse $\prod_j \chi(g_j)^{k_j}$ to
a single character, so abelian groups are the natural limit of the construction.

\begin{remark}\label{rem:noncyclic}
The least order is \emph{not} attained by a cyclic group. Over $G = (\Z_2)^{n-1}$
take $g = (0, e_1, \dots, e_{n-1})$, with target $t = \sum_j g_j = \mathbf 1$; any
$k$ with $|k| = n$ and $\sum_j k_j g_j = t$ has $\sum_{j\ge1} k_j e_j = \mathbf 1$,
forcing $k_1, \dots, k_{n-1}$ odd, whence $k = \mathbf 1$ exactly as in
\cref{prop:three}\,\textup{(G)}. So this order-$2^{n-1}$ group carries a valid
size-$n$ embedding --- it is Glynn's $2^{n-1}$-term form read as a character scheme
--- giving the upper bound $\min_G |G| \le 2^{n-1}$. Since
$2^{n-1} < 2^n - 2^{\lfloor \log_2 n \rfloor}$ for every $n \ge 3$, this
non-cyclic group is strictly smaller than the least valid cyclic modulus of
\cref{thm:minmod}: no cyclic evaluator is optimal. An exhaustive search over all
finite abelian groups confirms $2^{n-1}$ is exactly the least order for
$2 \le n \le 6$, and \cite{fonollosa2026minmod} proves it least among elementary
abelian $2$-groups for all $n$; whether $2^{n-1}$ stays optimal over \emph{all}
abelian groups for every $n$ is open --- though by \cref{cor:nwbound} below it
is optimal to within a factor $O(\sqrt n\,)$.
\end{remark}

The size of \emph{any} evaluation scheme --- character-based or not --- is
bounded below by the partial-derivative method of arithmetic-circuit
complexity \cite{nisan1997}, and through \cref{prop:abelian} the bound becomes
a purely combinatorial statement about abelian groups.

\begin{corollary}[lower bound for schemes and for abelian groups]\label{cor:nwbound}
Every evaluation scheme of order $n$ computing the permanent has
$|R| \ge \binom{2n}{n}/2^{\,n} \sim 2^{\,n}\!/\sqrt{\pi n}$. Consequently
every finite abelian group $G$ admitting a size-$n$ embedding with unique
multiset sums satisfies $|G| \ge \binom{2n}{n}/2^{\,n}$, so by
\cref{rem:noncyclic}
\[
\binom{2n}{n}\Big/2^{\,n} \;\le\; \min_G |G| \;\le\; 2^{\,n-1},
\]
and $(\Z_2)^{n-1}$ is optimal to within a factor $\sqrt{\pi n}/2$.
\end{corollary}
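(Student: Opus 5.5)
The plan is the partial-derivative method of Nisan and Wigderson applied to the set-multilinear structure of $T[B]$. Regard the $n^2$ entries $b_{ij}$ as indeterminates. Each summand of $T[B]$ is
\[
Q_r(B)=\prod_{i=1}^n \ell_{r,i}, \qquad \ell_{r,i}=\sum_j v_{r,j}\,b_{ij},
\]
where $\ell_{r,i}$ is a linear form in the row-$i$ variables only. For $S\subseteq\{1,\dots,n\}$ with $|S|=s$, a mixed partial of order $s$ taking one derivative in each row of $S$ kills every term not set-multilinear in those rows. Any derivative taking two variables from the same row vanishes on both sides, since both polynomials are multilinear in rows. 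Hence the space $\partial^{s}Q_r$ of order-$s$ partials of $Q_r$ is contained in
\[
\operatorname{span}\Bigl\{\textstyle\prod_{i\notin S}\ell_{r,i} \,:\, |S|=s\Bigr\},
\]
which has dimension at most $\binom{n}{s}$. Taking all orders $s$ together gives $\dim\partial^{*}Q_r\le\sum_s\binom ns=2^n$. Partial-derivative spaces are subadditive, so $\dim\partial^{*}T\le|R|\,2^n$.

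For the lower side I will show $\dim\partial^{*}\per\ge\binom{2n}{n}$. Differentiating $\per B$ with respect to $b_{i_1j_1}\cdots b_{i_sj_s}$ gives, up to zero or a single monomial, the permanent of the complementary submatrix. So every subpermanent $\per B_{I,J}$ with $|I|=|J|$ lies in $\partial^{*}\per$. These subpermanents are nonzero and have pairwise disjoint monomial supports: a monomial $\prod_{i\in I}b_{i\sigma(i)}$ determines $I$ and $J$. Disjoint supports make them linearly independent over any field. Counting pairs $(I,J)$ gives $\sum_s\binom ns^2=\binom{2n}{n}$. Setting $T=\per$ yields $|R|\ge\binom{2n}{n}/2^n$. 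Stirling's formula, $\binom{2n}{n}\sim 4^n/\sqrt{\pi n}$, then gives the asymptotic form.

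For the group statement, take an abelian $G$ with a unique-multiset-sum embedding. By \cref{prop:abelian} the character scheme computes $\per B$ with $|R|=|\hat G|=|G|$, so $|G|\ge\binom{2n}{n}/2^n$. \cref{rem:noncyclic} supplies the upper bound $2^{n-1}$. The quotient of the two bounds is $2^{n-1}\cdot2^n/\binom{2n}{n}\sim\sqrt{\pi n}/2$.

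The main obstacle is the upper bound $\dim\partial^{*}Q_r\le2^n$. The point is that this bound does not depend on $v_r$: because each factor is linear in a distinct block of variables, a derivative only selects which factors survive. I will state this carefully, counting only derivatives in distinct rows, since others vanish identically. Everything else is bookkeeping.
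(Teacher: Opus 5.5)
Your proposal is correct and follows the paper's proof essentially step for step. In both, the subpermanents (one for each pair $(I,J)$ with $|I|=|J|$, with disjoint monomial supports) give $\dim\partial(\per)\ge\binom{2n}{n}$; each product gate contributes at most $2^n$ because a derivative only selects which row factors survive; subadditivity yields $|R|\ge\binom{2n}{n}/2^n$; and \cref{prop:abelian} transfers the bound to $|G|$. The only blemish is the phrase ``up to zero or a single monomial'': differentiating with respect to entries in distinct rows and columns gives exactly the complementary subpermanent, and zero otherwise, which is all the argument needs.
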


\begin{proof}
For a polynomial $f$ in the entries $b_{ij}$ let $\partial(f)$ denote the
linear span of $f$ together with all its partial derivatives, of every order
\cite{nisan1997}. Differentiating $\per B$ with respect to a set of entries
gives the permanent of the complementary minor when the set has no repeated
row or column, and $0$ otherwise; subpermanents of distinct minors have
disjoint monomial supports --- the row and column sets of any monomial recover
the minor --- so they are linearly independent and
$\dim \partial(\per) = \sum_{k=0}^{n} \binom{n}{k}^{2} = \binom{2n}{n}$.
For a single product gate $P_r = \prod_{i=1}^n \bigl(\sum_j b_{ij}
v_{r,j}\bigr)$, differentiating kills any factor hit twice and replaces a
factor hit once by a constant, so every partial derivative of $P_r$ is a
scalar multiple of $\prod_{i \notin T} \bigl(\sum_j b_{ij} v_{r,j}\bigr)$ for
$T$ the set of rows differentiated: $\dim \partial(P_r) \le 2^{\,n}$. As
$\partial(\cdot)$ is subadditive over linear combinations,
$\per = c\sum_{r \in R} w_r P_r$ forces
$\binom{2n}{n} \le |R|\,2^{\,n}$; the asymptotics is Stirling. For the second
statement, a size-$n$ embedding with unique multiset sums in $G$ makes the
character scheme --- $|R| = |G|$ points --- compute the permanent
(\cref{prop:abelian}). Both counting arguments use only disjointness of
monomial supports, so the bound holds over every field.
\end{proof}

\begin{remark}
\cref{prop:ortho} uses only that distinct monomials in the entries $b_{ij}$ are
linearly independent, so it holds over any commutative ring: \textup{(R)} and
\textup{(G)} compute the permanent over $\Z$ directly, and \textup{(F)} does so
over $\F_q$ for a prime $q \equiv 1 \pmod N$ (\cref{rem:ntt}) --- the three
schemes share one criterion across $\C$, $\Z$, and finite fields.
\end{remark}

\section{The cyclic transform}\label{sec:identity}

Scheme \textup{(F)} of \cref{prop:three} --- the one new to this paper, and the
only one whose orthogonality is conditional --- rewards a closer look. Collapsing
the $n$ variables of $P_B$ onto powers of a single one, $x_j \mapsto x^{a_j}$ for
a set $A = \{a_0, \dots, a_{n-1}\}$ of distinct nonnegative integers, turns the
row-product polynomial into a univariate one, in which $\per B$ survives as the
coefficient of $x^{\,p}$, $p = \sum_j a_j$ --- \emph{provided} no other size-$n$
multiset of the $a_j$'s reaches the same total $p$. Reducing modulo $x^N - 1$
folds all exponents into $\Z_N$; the fold is harmless exactly when $A$ is valid
mod $N$, and the coefficient of $x^{\,p}$ is then recovered by a size-$N$ discrete
Fourier transform.

Throughout this section $A = \{a_0, \dots, a_{n-1}\} \subseteq \{0, 1, \dots,
N-1\}$ is a set of $n$ distinct residues, $p \equiv \sum_j a_j \pmod N$, and
$B = (b_{ij}) \in \C^{n \times n}$.

Expanding the product of the row polynomials
$\prod_{i}\bigl(\sum_{j} b_{ij} x^{a_j}\bigr)$ produces one term per function
$f : \{1,\dots,n\} \to \{0,\dots,n-1\}$ (row $i$ picks column $f(i)$), with coefficient
$\prod_i b_{i f(i)}$ and exponent $\sum_i a_{f(i)}$. Reducing modulo $x^N - 1$
folds the exponents into $\Z_N$. The coefficient of $x^{\,p}$ therefore
collects exactly the $f$ whose multiset of exponents sums to $p$ modulo $N$ ---
and if $A$ is valid mod $N$, those $f$ are precisely the bijections. Since the
coefficients of a polynomial modulo $x^N - 1$ are recovered by the inverse
discrete Fourier transform of its values at the $N$-th roots of unity, this
writes scheme \textup{(F)} out explicitly:

\begin{proposition}[DFT permanent identity]\label{prop:dft}
Let $A$ be valid mod $N$ and $\omega = e^{2\pi i/N}$. Then for every
$B \in \C^{n \times n}$,
\begin{equation}\label{eq:dft}
\per B \;=\; \frac{1}{N} \sum_{r=0}^{N-1} \omega^{-pr}
\prod_{i=1}^{n} \Bigl( \sum_{j=0}^{n-1} b_{ij}\, \omega^{a_j r} \Bigr).
\end{equation}
\end{proposition}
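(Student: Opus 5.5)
The identity \eqref{eq:dft} is precisely the transform sum $T[B]$ of scheme \textup{(F)} in \cref{prop:three}, written out. With $R = \{0,\dots,N-1\}$, $v_{r,j} = \omega^{a_j r}$, $w_r = \omega^{-pr}$ and $c = 1/N$, the right-hand side of \eqref{eq:dft} is literally
\[
c\sum_{r\in R} w_r \prod_{i=1}^n\Bigl(\sum_j b_{ij} v_{r,j}\Bigr) = T[B].
\]
So the plan is to invoke \cref{prop:three}\,\textup{(F)}, which gives $\Phi(k) = [\,k = \mathbf 1\,]$ for all $|k| = n$ because $A$ is valid mod $N$. Then \cref{prop:ortho} gives $T[B] = \per B$ for every $B$.

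For a self-contained account I would also give the polynomial reading sketched just before the statement. First, expand
\[
Q(x) = \prod_i\Bigl(\sum_j b_{ij}x^{a_j}\Bigr)
\]
over maps $f:\{1,\dots,n\}\to\{0,\dots,n-1\}$. Each $f$ contributes $\prod_i b_{if(i)}\,x^{\sum_i a_{f(i)}}$. Second, reduce modulo $x^N-1$ to get $\tilde Q(x) = \sum_{s\in\Z_N} q_s x^s$, where $q_s$ collects the $f$ with $\sum_j k_j(f) a_j \equiv s \pmod N$.

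By validity, the $f$ counted in $q_p$ are exactly those with $k(f) = \mathbf 1$, i.e.\ the bijections. Hence $q_p = \per B$.

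Third, since $\omega^N = 1$, we have $Q(\omega^r) = \tilde Q(\omega^r)$. Inverse-DFT orthogonality,
\[
\tfrac1N\sum_{r}\omega^{r(s-p)} = [\,s\equiv p\,],
\]
then gives
\[
q_p = \tfrac1N\sum_r \omega^{-pr}\tilde Q(\omega^r),
\]
which is \eqref{eq:dft}.

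There is no real obstacle here. The one point needing care is that $p$ is taken mod $N$, so $\omega^{-pr}$ is well defined. A second is that validity is used exactly once, to identify the maps in the residue class $p$ with the bijections: every map $f$ with $k(f) \neq \mathbf 1$ has exponent sum $\not\equiv p$.
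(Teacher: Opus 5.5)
Your proposal is correct and is essentially the paper's proof. The paper likewise expands the product over maps $f$, uses $\frac1N\sum_{r}\omega^{(S-p)r} = [\,S\equiv p \pmod N\,]$ to keep only those $f$ with $\sum_i a_{f(i)} \equiv p$, and lets validity reduce these to the bijections; this is your coefficient-$q_p$-plus-inverse-DFT argument with the two sums exchanged in a single step. Your shortcut through \cref{prop:three}\,\textup{(F)} and \cref{prop:ortho} is also legitimate and not circular, since (F) is proved earlier in the paper by the same one-line character-orthogonality computation.
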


\begin{proof}
By orthogonality of the characters of $\Z_N$,
$\frac1N \sum_{r=0}^{N-1} \omega^{(S - p)r} = [\,S \equiv p \pmod N\,]$.
Expanding the product inside \eqref{eq:dft} and exchanging sums,
\[
\text{RHS} \;=\; \sum_{f : \{1,\dots,n\} \to \{0,\dots,n-1\}}
\Bigl(\prod_{i} b_{i f(i)}\Bigr)
\cdot \Bigl[\, \textstyle\sum_i a_{f(i)} \equiv p \pmod N \,\Bigr].
\]
Writing $k_j = |f^{-1}(j)|$, the bracket condition is
$\sum_j k_j a_j \equiv p \pmod N$ with $\sum_j k_j = n$; validity forces
$k = (1,\dots,1)$, i.e.\ $f$ bijective. The surviving terms are exactly
$\sum_{\sigma \in S_n} \prod_i b_{i\sigma(i)} = \per B$.
\end{proof}

\begin{remark}[complex DFT or number-theoretic transform]\label{rem:ntt}
The proof uses only two properties of $\omega$: it has exact multiplicative
order $N$, and $N$ is invertible. Both hold in any field containing a
primitive $N$-th root of unity, not just $\C$ --- orthogonality is the
geometric sum $\sum_{r<N} \omega^{sr} = (\omega^{sN}-1)/(\omega^{s}-1) = 0$
for $s \not\equiv 0 \pmod N$. In particular, for a prime $q \equiv 1
\pmod N$ any element $\omega \in \F_q^{\times}$ of order $N$ makes
\eqref{eq:dft} hold verbatim in $\F_q$; the evaluation is then a size-$N$
\emph{number-theoretic transform} (NTT) and computes $\per B \bmod q$ for
integer $B$. Repeating over several such primes and reconstructing by the
Chinese remainder theorem recovers the exact integer permanent with no
rounding anywhere. We write $\omega = e^{2\pi i/N}$ here only
for concreteness.
\end{remark}

Validity is also \emph{necessary}, and checkably so:

\begin{proposition}[converse; the all-ones test]\label{prop:converse}
If $A$ is not valid mod $N$, then \eqref{eq:dft} fails for the all-ones matrix
$J_n$. More precisely, for $B = J_n$ the right-hand side of \eqref{eq:dft}
equals the number of functions $f$ with $\sum_i a_{f(i)} \equiv p \pmod N$,
namely
\[
\sum_{\substack{k \ge 0,\ \sum_j k_j = n \\ \sum_j k_j a_j \equiv p \ (N)}}
\binom{n}{k_0, \dots, k_{n-1}} \;\;\ge\;\; n! \;=\; \per J_n,
\]
with equality if and only if $A$ is valid mod $N$.
\end{proposition}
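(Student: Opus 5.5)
I plan to reuse the expansion already carried out in the proof of \cref{prop:dft}, but this time without invoking validity. Expanding the product in \eqref{eq:dft} and using character orthogonality $\frac1N\sum_{r}\omega^{(S-p)r} = [\,S\equiv p \pmod N\,]$ gives, for an arbitrary matrix $B$,
\[
\text{RHS} \;=\; \sum_{f}\Bigl(\prod_i b_{if(i)}\Bigr)\bigl[\,\textstyle\sum_i a_{f(i)} \equiv p \pmod N\,\bigr].
\]
Specializing to $B = J_n$ makes every product $\prod_i b_{if(i)}$ equal to $1$. The right-hand side is then exactly the number of functions $f:\{1,\dots,n\}\to\{0,\dots,n-1\}$ satisfying $\sum_i a_{f(i)}\equiv p \pmod N$. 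This proves the first claim about what the right-hand side counts.

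Next I group these functions by their multiplicity vector $k(f)$, where $k_j = |f^{-1}(j)|$. The congruence depends on $f$ only through $k$, since $\sum_i a_{f(i)} = \sum_j k_j a_j$. The number of functions with a prescribed $k$ satisfying $|k| = n$ is the multinomial coefficient $\binom{n}{k_0,\dots,k_{n-1}}$, because it counts the ways to distribute the $n$ rows into labelled classes of sizes $k_j$. This yields the displayed sum.

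For the inequality, I observe that $k=\mathbf 1$ always satisfies the congruence, since $\sum_j a_j \equiv p$ by definition. That term contributes $\binom{n}{1,\dots,1} = n! = \per J_n$, and every other term is a nonnegative integer. Hence the sum is at least $n!$. Equality holds exactly when there is no admissible $k \neq \mathbf 1$, because each such $k$ contributes a strictly positive multinomial coefficient. The absence of any such $k$ is precisely the definition of validity mod $N$.

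The first sentence of the statement then follows at once. If $A$ is not valid, the inequality is strict, so the right-hand side of \eqref{eq:dft} differs from $\per J_n$. The only point that needs care, and it is minor, is this positivity. It is what prevents a rival multiset from being silently cancelled, in contrast with general $B$, where signed entries could in principle conspire. Using $J_n$, with all entries positive, is exactly what rules out such cancellation, so I expect no real obstacle.
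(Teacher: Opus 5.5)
Your proposal is correct and follows the paper's proof essentially step for step. You specialize the expansion from \cref{prop:dft} to $B = J_n$ and group the counted functions by multiplicity vector into multinomial coefficients. The term $k = \mathbf 1$ contributes $n!$, and any other feasible $k$ adds a strictly positive multinomial coefficient; such a $k$ exists exactly when $A$ is invalid mod $N$.
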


\begin{proof}
With $b_{ij} = 1$ every product $\prod_i b_{i f(i)}$ equals $1$, so as in the
proof of \cref{prop:dft} the right-hand side counts the functions $f$
satisfying the congruence; grouping them by multiplicity vector $k$ gives the
multinomial sum. The vector $k = (1,\dots,1)$ is always feasible and
contributes $n!$; any other feasible $k$ contributes its (positive) multinomial
coefficient, and one exists precisely when $A$ is invalid.
\end{proof}

Thus a single evaluation of \eqref{eq:dft} at $B = J_n$, compared against
$n!$, is a complete numerical validity test for $(A, N)$. Applied to the
super-increasing set it confirms validity at $N = \Nmin(n) = 2^n - 2^{\lfloor
\log_2 n \rfloor}$ numerically for all $n \le 32$ --- an independent check of
\cref{thm:minmod} at sizes far beyond the reach of enumerating the
$\binom{2n-1}{n}$ multisets.

\begin{remark}[translation invariance]\label{rem:translation}
Replacing every $a_j$ by $a_j + t \pmod N$ shifts each exponent sum
$\sum_i a_{f(i)}$ and the target $p$ by the same amount $tn$, so validity of
$(A, N)$ and the identity \eqref{eq:dft} are unaffected. Two normalizations of
the super-increasing set are therefore equally valid: offset $0$ (the set
$\{2^k - 1\}$ of \cref{thm:minmod}, containing $0$) and offset $1$ (the
geometric ladder $a_k = 2^k$), whose extra arithmetic structure a
computational implementation can exploit. Likewise $\omega$ may be replaced by
$\omega^{-1}$ (conjugating \eqref{eq:dft}, whose value is the real number
$\per B$ when $B$ is real).
\end{remark}

\paragraph{Cost.}
The identity \eqref{eq:dft} is a sum of $N \approx 2^n$ terms, each formed from
$n$ row sums against a shared table of powers of $\omega$; taken term by term
this costs $\Theta(2^n n^2)$ arithmetic operations. As with Ryser's and Glynn's
formulas, it drops to $\Theta(2^n n)$ --- the same asymptotic cost as those
formulas \cite{ryser1963,glynn2010,nijenhuis1978} --- only by traversing the
frequencies $r$ in an order in which consecutive terms differ incrementally, so
that the geometric-ladder structure of $a_k = 2^k$ lets each term reuse the
previous one's row sums rather than recompute them; this ordered reuse is the
exact analogue of the Gray-code traversal of the subsets in the classical
formulas, and no one of the three is thereby cheaper than the others. \Cref{thm:minmod}
is what makes the modulus, and hence the transform length, as small as
$N \approx 2^n$ in the first place.

\section{Conclusion}\label{sec:conclusion}

Ryser's formula, Glynn's formula, and the discrete Fourier transform are one
construction seen from three angles: orthogonal evaluation schemes that read the
permanent off as a single coefficient of the row-product polynomial
(\cref{prop:ortho}). Ryser and Glynn are the $0$--$1$ and $\pm 1$ faces of one
Hadamard system, orthogonal for every matrix; the scheme studied here
uses instead an order-$N$ \emph{cyclic} character, for which orthogonality is no
longer automatic but is the arithmetic condition that $(A, N)$ be valid. The
least $N$ that meets it --- the smallest cyclic transform representing the
permanent at all --- is the subject of the companion paper
\cite{fonollosa2026minmod}.

As algorithms the three are practically equivalent. Each sums on the order of
$2^n$ evaluations; each costs $\Theta(n^2)$ arithmetic operations per evaluation
when the points are taken in isolation ($n$ row-forms of $n$ terms), and each
lowers this to $\Theta(n)$ per point --- hence to $\Theta(2^n n)$ overall ---
only by traversing the points in an order in which consecutive evaluations differ
incrementally and reuse almost all of their work: the Gray-code order of the
subsets for Ryser and Glynn, and an analogous ordered traversal of the powers of
$\omega$ for the transform. None of the three is asymptotically or structurally
cheaper than the others. What the common framework contributes is therefore
conceptual, not computational. The orthogonality criterion (\cref{prop:ortho})
pins down \emph{exactly} which weighted evaluations compute the permanent, and
\cref{prop:abelian} identifies them: the character schemes of finite abelian
groups, and no more --- the criterion collapses $\prod_j v_{r,j}^{k_j}$ onto a
single dual character, which needs the multiplicative closure that group
characters supply, so an arbitrary orthogonal family does not qualify. Within
this class Ryser and Glynn are the two Boolean faces (\cref{prop:three}) and the
cyclic DFT is a third, new instance, with the transform the single degree of
freedom separating them. The algebraic price of admission is uniform: feasibility
and size are governed by \emph{unique multiset sums} in the chosen group, so exact
evaluation of the permanent becomes a combinatorial question about that group ---
the minimum valid modulus $\Nmin(n)$ for cyclic groups (\cref{thm:minmod}), and
the least order $|G|$ over all finite abelian groups, which is \emph{not} attained
cyclically: Glynn's own $2^{n-1}$-term form is the character scheme over
$(\Z_2)^{n-1}$, of order $2^{n-1}$, already below the least valid cyclic modulus
for $n \ge 3$ (\cref{rem:noncyclic}), and within a factor $O(\sqrt n\,)$ of the
partial-derivative lower bound $\binom{2n}{n}/2^n$ that every evaluation scheme
must obey (\cref{cor:nwbound}).

\bibliographystyle{plain}
\bibliography{refs}

\end{document}